\newtheorem{observation}{Observation}
\newtheorem{proposition}{Proposition}
\newtheorem{remark}{Remark}
\title{\normalsize\bf
\uppercase{Network Bandwidth Allocation Problem \\For Cloud Computing}
}
\author{
Changpeng Yang$^{2}$,
Jintao You$^{2}$, Xiaoming Yuan$^{1}$, Pengxiang Zhao$^{1*}$
}
\begin{document}

\date{}

\maketitle

\vspace{-0.5cm}

\begin{center}

*Corresponding author\\
$^1$Department of Mathematics, University of Hong Kong \\
$^2$Algorithm Innovation Lab, Huawei \\
E-mails: yangchangpeng@huawei.com, youjintao5@huawei.com, \\
xmyuan@hku.hk, pengxiangzhao@connect.hku.hk

\smallskip

\today

\end{center}

\bigskip
\noindent
{\small{\bf Abstract.}
Cloud computing enables ubiquitous, convenient, and on-demand network access to a shared pool of computing resources. Cloud computing technologies create tremendous commercial values in various areas, while many scientific challenges have arisen accordingly.
The process of transmitting data through networks is characterized by some distinctive characteristics such as nonlinear, nonconvex and even noncontinuous cost functions generated by pricing schemes, periodically updated network topology, as well as replicable data within network nodes. Because of these characteristics, data transfer scheduling is a very challenging problem both engineeringly and scientifically. On the other hand, the cost for bandwidth is a major component of the operating cost for cloud providers, and thus how to save bandwidth cost is extremely important for them to supply service with minimized cost.
We propose the Network Bandwidth Allocation (NBA) problem for cloud computing and formulate it as an integer programming model on a high level,
with which more comprehensive and rigorous scientific studies become possible. We also show that the NBA problem captures some of the major cloud computing scenarios including the content delivery network (CDN), the live video delivery network (LVDN), the real-time communication network (RTCN), and the cloud wide area network (Cloud-WAN).
}

\medskip
\noindent
{\small{\bf Keywords}{:}
cloud computing, bandwidth allocation, network, the 95th percentile billing, integer programming, content delivery network, live video delivery network, real-time communication, cloud wide area network
}

\baselineskip=\normalbaselineskip

\section{Introduction}\label{sec:1}
Nowadays, cloud computing is infrastructure of information technology industry; it has boosted some revolutionary technologies in various areas and essentially reshaped some economic ecosystems. The term ``cloud'' originates from the field of telecommunications, over which providers offer Virtual Private Network (VPN) service while redirect traffic to balance the load of the overall network is allowed, see \cite{jadeja2012cloud}.
Cloud computing extends the VPN service; it enables ubiquitous, convenient and on-demand network access to a shared pool of computing resources, which may include networks, servers, storage, and applications, see, e.g. \cite{mell2011nist}. Cloud computing is the pillar to many highly commercialized economic ecosystems, and it creates tremendous commercial values.

Customers, cloud providers, and Internet service providers (ISPs) are three major stakeholders in cloud computing.
For a cloud provider, it targets to supply reliable, customized, and quality of service (QoS) guaranteed service for customers, with the purpose of minimizing the bandwidth cost (paid to ISPs) which is indeed the main component of the overall operating cost, see \cite{armbrust2010view, liu2012}.
Therefore, scheduling data transmission to save bandwidth cost is extremely important for cloud providers.
In contrast to other traffic problems such as Vehicle Routing Problems, data transfer through computer networks has the following three distinctive characteristics:
\begin{itemize}[noitemsep]
    \item The cost functions generated by pricing schemes are nonlinear, nonconvex, and noncontinuous;
    \item Network topology updates periodically;
    \item Data can be replicated within network nodes.
\end{itemize}
The first feature is standard in the industry. For example, as mentioned in \cite{jiangjuncheng2015,zhan2016optimal,singh2021cost-effective}, the well-known $95^{th}$ percentile billing is a widely used pricing scheme, which leverages $95^{th}$ percentile of the bandwidth distribution over monthly periods.
Due to the variability of network status, for example, congestion may occur among some connections, network topology may update periodically to meet requirements such as reliability and low latency. This explains the second characteristic above.
As for the third one, digital data is a kind of special merchandise that can be replicated in devices without occupying transfer bandwidth.
Therefore, a network node, which is usually referred to a server or a data center, only needs to ask for desired data from another node while it can provide the obtained data to more than one node.
An underlying consequence is that the egress bandwidth of intermediary network nodes contained in the path of transmitting the same content is usually more than its ingress bandwidth.
To the best of our knowledge, it is the first time to consider this characteristic for modelling cloud computing problems.
With the concern of saving bandwidth cost in the industry as well as these important attributes of data transfer over computer networks, we propose the Network Bandwidth Allocation (NBA) problem for various cloud computing problems, and initiate the effort of studying it from the optimization perspective.

Formally speaking, the NBA problem is defined on a network $G = (V, E)$ during a billing cycle $P$, in which $V = \{1, 2, \dots, n\}$ is the set of network nodes abstracted from servers or data centers; $E = \{(i, j)|i, j \in V, i \neq j\}$ is the set of directed edges abstracted from network links; and $P$ is a given period of time separated by a set of sampling time points $T = \{1, 2, \dots, p\}$.
The egress bandwidth of the node $i \in V$ denoted by $\phi_i$ is the summation of bandwidth allocated to directed edges $\{(u, v)|u=i, v\in V, (u, v) \in E\}$; and its ingress bandwidth denoted by $\psi_i$ is the summation of bandwidth allocated to directed edges $\{(u, v)|u \in V, v=i, (u, v) \in E\}$.
We denote by $c_i^{out}$ and $c_i^{in}$ the admissible maximum egress bandwidth and maximum ingress bandwidth for the node $i \in V$, respectively.
During the billing cycle $P$, the pricing scheme is associated with the bandwidth distribution over $T$.
For the node $i \in V$, let $b_i^{out}$ be the $95^{th}$ percentile of its egress bandwidth distribution $\{\phi_i^{(t)}\}_{t=1}^p$; let $b_i^{in}$ be the $95^{th}$ percentile of its ingress bandwidth distribution $\{\psi_i^{(t)}\}_{t=1}^p$.
The cost incurred on $i$ is given by $u_i \cdot max(\{b_i^{out}, b_i^{in}\})$, where $u_i > 0$ is the unit-price and $max(\cdot)$ is the operation to get the maximum element of a number set.
Assume that in the time slot $[t, t+1], t \in T$, the set of available edges is $E^{(t)} \subseteq E$; the set of source nodes providing data is $S^{(t)} \subseteq V$; the set of destination nodes requiring data that is originally from the source node $s \in S^{(t)}$ is $D_s^{(t)} \subseteq V \setminus \{s\}$; and the bandwidth required to transmit such data with an acceptable latency between two nodes is at least $w_s^{(t)} > 0$.
The NBA problem consists of determining bandwidth allocation plans for the network $G$ to minimize the total bandwidth cost during $P$, while the following conditions are satisfied in every time slot $[t, t+1], t \in T$:
\begin{itemize}[noitemsep]
    \item Data can be transmitted out from all source nodes;
    \item Data can be transmitted into all destination nodes;
    \item The egress bandwidth allocation to a node does not exceed the admissible maximum egress bandwidth, and the ingress bandwidth allocation to it does not exceed the admissible maximum ingress bandwidth;
    \item To transmit data that is originally from the same source node, the required ingress bandwidth of a node is no more than the required egress bandwidth unless it is a destination node.
\end{itemize}
Note that the last condition takes the data replication ability of network nodes into account.
The NBA problem is an abstract and generic model, and it can be extended flexibly to suit various scenarios in practice.
The NBA problem is challenging because of its distinctive characteristics mentioned above, and also because of the high dimensionality of its variables in real applications.

In Section \ref{sec:2}, we review some related works. Then, we focus on the integer programming formulation of the NBA problem in Section \ref{sec:pro_for}.
In Section \ref{rea_ins}, we show some real-life cloud computing applications which can be captured by the NBA problem and its extensions.
Finally, some discussions are included in Section \ref{sec:dis}.

\section{Literature review}\label{sec:2}
To save bandwidth cost, it is conventional to apply a centralized controller to schedule data traffic in a network.
In \cite{hong2013}, a centralized software-driven wide area network (SWAN) system is proposed to coordinate the sending rates and network paths among data centers globally.
Besides, a centralized traffic engineering system named B4 is proposed in \cite{Sushant2013} to improve the utilization of Google's global data centers, in which traffic flows are split into multiple paths for balance.
To balance the requirements of decreasing latency and saving cost, a system called video delivery network (VDN) is proposed in \cite{jiangjuncheng2015} to allow cloud providers to control stream placement and bandwidth allocation dynamically.
The proposed VDN incorporates a central controller and local agents to the traditional content delivery network infrastructure, where the central controller generates bandwidth allocation plans based on the current network state and the local agents deal with the incoming requests in real time. These very interesting works are essentially based on engineering techniques, and they inspire the need of optimization models with more rigorous theoretical analysis.
% [The proposed NBA problem describes the cost-saving problem in bandwidth allocation as an integer programming model, which can be extended flexibly to suit various scenarios in practice.]-[why say this here?]

Complexity of the pricing scheme in the real world also brings challenges to saving bandwidth. The $95^{th}$ percentile billing is a widely used pricing scheme of ISPs, which is considered to reflect the required capacity of the connections in a network better than other methods.
Because most networks are over-provisioned, there is usually reserved balance to handle bursting traffic.
The $95^{th}$ percentile billing refers to ignoring the top $5\%$ samples and charging the bandwidth used by the leaving $95\%$ samples.
To illustrate, for a monthly billing, the samples of consumed bandwidth are sorted at the end of each month, and the top 36 hours (i.e., the top 5\% of the overall 720 hours) of peak traffic will not be charged. It means that the bandwidth could be used for free at a higher rate for up to 72 minutes per day. Therefore, mathematically the cost function generated by $95^{th}$ percentile billing is noncontinuous and nonconvex, and researchers have shown that optimizing the burstable billing of the $95^{th}$ percentile usage is NP-hard, see, e.g. \cite{garey1990,jalaparti2016}.
To save cost charged by the $95^{th}$ percentile billing, a mixed integer linear programming (MILP) model is proposed in \cite{zhan2016optimal}.
In \cite{singh2021cost-effective}, the authors propose another MILP model to further relax this problem. Note that the mentioned characteristics of transmitting data through computer networks are not fully considered in these models.
For the NBA problem to be proposed, we consider the $95^{th}$ percentile billing and the distinctive characteristics of data delivery through networks. Besides, the NBA problem can be scalable to other pricing schemes if the objective function is appropriately adjusted.
Since attributes of data transmission through networks are considered, feasible solutions of the NBA problem have special properties, which will be delineated in the following sections.

\section{Formulation and model}\label{sec:pro_for}

In this section, we propose the NBA problem with details, and formulate it as an integer programming model.

\subsection{Problem identification}\label{sec:pro_ide}

Let the NBA problem be defined on a network $G=(V, E)$ during a billing cycle $P$, where $V = \{1, 2, \dots, n\}$ is the set of network nodes abstracted from servers or data centers; $E = \{(i, j)|i, j \in V, i \neq j\}$ is the set of edges abstracted from network links; and $P$ is a given period of time separated by a set of sampling time points $T = \{1, 2, \dots, p\}$.
Components are given as follows.

\textbf{Definitions:}
\begin{itemize}[leftmargin=*]
\item \textit{bandwidth of an edge:}
the bandwidth of an edge is the amount of data transmitted over it in a given amount of time, which is calculated in megabits per second (Mbps).
\item \textit{ingress bandwidth of a node:} the ingress bandwidth of the node $i \in V$ is the summation of bandwidth of directed edges $\{(u, v)|u\in V, v=i,  (u, v) \in E\}$.
\item \textit{egress bandwidth of a node:} the egress bandwidth of the node $i \in V$ is the summation of bandwidth of directed edges $\{(u, v)|u=i, v\in V, (u, v) \in E\}$.
\item \textit{ingress bandwidth capacity of a node:} the ingress bandwidth capacity of a node is its admissible maximum ingress bandwidth.
\item \textit{egress bandwidth capacity of a node:} the egress bandwidth capacity of a node is its admissible maximum egress bandwidth.
\item \textit{bandwidth allocation plan:} a bandwidth allocation plan refers to a feasible solution of the NBA problem.
\item \textit{optimal bandwidth allocation plan:} the optimal bandwidth allocation plan refers to the optimal solution of the NBA problem.
\end{itemize}

\textbf{Pricing scheme:}

During a billing cycle $P$, the pricing scheme is associated with the bandwidth distribution over $T$.
For the node $i \in V$, let $b_i^{out}$ be the $95^{th}$ percentile of its egress bandwidth distribution $\{\phi_i^{(t)}\}_{t=1}^p$; let $b_i^{in}$ be the $95^{th}$ percentile of its ingress bandwidth distribution $\{\psi_i^{(t)}\}_{t=1}^p$.
The cost incurred on $i$ is given by $u_i \cdot max(\{b_i^{out}, b_i^{in}\})$, where $u_i > 0$ is the unit-price and $max(\cdot)$ is the operation to get the maximum element of a number set.

\textbf{Parameters}:
\begin{itemize}[leftmargin=*]
    \item $u_i > 0$ is the unit-price of bandwidth of the node $i \in V$.
    \item $c_i^{in} > 0$ is the ingress bandwidth capacity of the node $i \in V$.
    \item $c_i^{out} > 0$ is the egress bandwidth capacity of the node $i \in V$.
    \item $E^{(t)} \subseteq E$ is the set of available edges in the time slot $[t, t+1], t \in T$.
    \item $S^{(t)} \subseteq V$ is the set of source nodes providing data in the time slot $[t, t+1], t \in T$.
    \item $D_s^{(t)} \subseteq V \setminus \{s\}$ is the set of destination nodes requiring data that is originally from the source node $s \in S^{(t)}$ in the time slot $[t, t+1], t \in T$.
    \item $w^{(t)}_s > 0$ is the least bandwidth required to deliver data that is originally from the source node $s \in S^{(t)}$ between two nodes in the time slot $[t, t+1], t \in T$.
\end{itemize}

\textbf{Decision variables:}

$f^{(t)}_{s i j} = 1$ if an edge $(i, j) \in E^{(t)}$ is contained in the bandwidth allocation plan to deliver the data that is originally from the source node $s \in S^{(t)}$ in the time slot $[t, t+1], t \in T$; and $0$ otherwise.

\textbf{Objective function:}

The total bandwidth cost during $P$ should be minimized.
Denote the egress bandwidth distribution of the node $i \in V$ over $T$ by
\begin{equation}
    B_i^{out} = \{\sum_{s\in S^{(t)}} (w^{(t)}_s \!\!\cdot\!\!\!\!\!\!\sum_{\substack{(i, j) \in E^{(t)} \\j \in V}}\!\!\!\! f^{(t)}_{s i j})\}^p_{t = 1},
\end{equation}
and the ingress bandwidth distribution of the node $i \in V$ over $T$ by
\begin{equation}
    B_i^{in} = \{\sum_{s\in S^{(t)}} (w^{(t)}_s \!\!\cdot\!\!\!\!\!\!\sum_{\substack{(h, i) \in E^{(t)} \\h \in V}}\!\!\!\! f^{(t)}_{s h i})\}^p_{t = 1}.
\end{equation}

Let $Q_{95}(\cdot)$ be the operation to get the $95^{th}$ percentile of a set of numbers. The goal is represented by
\begin{equation}\label{equ:obj_fun}
    \min \sum_{i \in V}(u_i \cdot max(\{Q_{95}(B_i^{out}), Q_{95}(B_i^{in})\})),
\end{equation}
where $max(\cdot)$ is the operation to get the maximum elements of a number set.

\textbf{Constraints:}
\begin{itemize}[leftmargin=*]
    \item Data can be transmitted out from all source nodes in the time slot $[t, t+1], t \in T$:
\begin{equation}\label{equ:nbap_con_source}
    \sum_{j \in V}f^{(t)}_{s s j} \geq 1, \quad \forall s \in S^{(t)}.
\end{equation}
    \item Data that is originally from a source node can be transmitted into all destination nodes requiring it in the time slot $[t, t+1], t \in T$:
\begin{equation}\label{equ:con_des}
    \sum_{\substack{(i, j) \in E^{(t)} \\i\in V}}\!\!\!\!f^{(t)}_{s i j}\geq 1, \quad \forall s \in S^{(t)}, \forall j \in D_s^{(t)}.
\end{equation}
    \item The egress bandwidth of a node does not exceed its egress bandwidth capacity:
\begin{equation}
    \sum_{s \in S^{(t)}} (w_s^{(t)} \!\!\cdot\!\!\!\!\!\! \sum_{\substack{(i, j) \in E^{(t)} \\ j \in V}} \!\!\!\!f^{(t)}_{s i j}) \leq c_i^{out}, \quad \forall i \in V.
\end{equation}
    \item The ingress bandwidth of a node does not exceed its ingress bandwidth capacity:
\begin{equation}
    \sum_{s \in S^{(t)}} (w_s^{(t)}\!\! \cdot\!\!\!\!\!\! \sum_{\substack{(h, i) \in E^{(t)} \\ h \in V}}\!\!\!\! f^{(t)}_{s h i}) \leq c_i^{in}, \quad \forall i \in V.
\end{equation}
    \item To transmit data that is originally from the same source node, the required ingress bandwidth of a node is no more than the required egress bandwidth unless it is a destination node:
\begin{equation}\label{equ:nbap_con_flow}
    \sum_{\substack{(i, j) \in E^{(t)} \\i \in V}}\!\!\!\!f^{(t)}_{s i j} \leq\!\!\!\! \sum_{\substack{(j, k) \in E^{(t)} \\ k \in V}}\!\!\!\!f^{(t)}_{s j k}, \quad \forall s \in S^{(t)}, \forall j \in V\setminus D^{(t)}_s.
\end{equation}
\end{itemize}
\begin{observation}\label{obs:1}
As data can be replicated within network nodes, it is sufficient to transmit a copy of data into a node by just one edge.
Then, for the optimal bandwidth allocation plan for transmitting data that is originally from the source node $s \in S^{(t)}$ in the time slot $[t, t+1], t \in T$, there is
\begin{equation}\label{equ:no_in_source}
    \sum_{\substack{(i, s) \in E^{(t)} \\i\in V}}\!\!f^{(t)}_{s i s}=0, \quad \forall s \in S^{(t)},
\end{equation}
and
\begin{equation}\label{equ:at_most_one}
    \sum_{\substack{(i, j) \in E^{(t)} \\i\in V}}\!\!f^{(t)}_{s i j}\leq 1, \quad \forall s \in S^{(t)}, \forall j \in V\setminus \{s\}.
\end{equation}
\end{observation}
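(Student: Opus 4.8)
The plan is to prove the slightly more precise statement that \emph{there exists} an optimal bandwidth allocation plan satisfying \eqref{equ:no_in_source} and \eqref{equ:at_most_one}; since the reductions below only weakly decrease the objective, one cannot conclude that \emph{every} optimal plan has these properties (ties occur when the altered time slot falls in the discarded top $5\%$), so the existential reading is the honest one. The engine of the argument is a monotonicity fact: for a fixed source $s$ and time slot $t$, deleting an active edge (resetting some $f^{(t)}_{sij}$ from $1$ to $0$) lowers the egress sample $\phi^{(t)}_i$ and the ingress sample $\psi^{(t)}_j$ each by $w^{(t)}_s$ and changes no other sample. Because $Q_{95}(\cdot)$ is monotone — lowering one entry of a distribution cannot raise its $95^{\text{th}}$ percentile — the cost terms $u_i\,\max\{Q_{95}(B_i^{out}),Q_{95}(B_i^{in})\}$ and $u_j\,\max\{Q_{95}(B_j^{out}),Q_{95}(B_j^{in})\}$ in \eqref{equ:obj_fun} weakly decrease while all others are unchanged. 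Hence any edge deletion that preserves feasibility cannot increase the cost, and it strictly lowers the total number of active edges.

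First I would fix an optimal plan that, among all optimal plans, uses the fewest active edges, and argue it must satisfy both identities. If \eqref{equ:no_in_source} failed, some edge $(i,s)$ would carry $s$'s own data into $s$; if \eqref{equ:at_most_one} failed, some node $j\ne s$ would have two incoming edges $(i_1,j),(i_2,j)$ for $s$'s data. In either case the target edge is redundant — the source already holds the data, and by the replication property one incoming copy suffices — so I would delete it. The head's constraints survive: deleting $(i,s)$ leaves \eqref{equ:nbap_con_source} untouched and only lowers $\psi^{(t)}_s$, while deleting $(i_2,j)$ keeps \eqref{equ:con_des} because a destination still retains at least one incoming edge, and it only relaxes \eqref{equ:nbap_con_flow} at $j$. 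The egress- and ingress-capacity constraints can likewise only be relaxed. So the one constraint a single deletion can break is \eqref{equ:nbap_con_flow} at the \emph{tail} of the deleted edge, whose egress has dropped by one.

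The hard part will be repairing this single possible violation, and it is exactly where the coupling of ingress and egress in \eqref{equ:nbap_con_flow} bites. If deleting an edge with tail $x$ makes $x$ a non-destination node with ingress exceeding egress, I would cascade: remove one incoming edge of $x$ (one exists, since its ingress is now positive), which restores \eqref{equ:nbap_con_flow} at $x$ but may in turn lower the egress at that edge's tail, and repeat. This traces a reverse walk through the active subgraph; since each step deletes an edge, the cascade terminates. It preserves \eqref{equ:con_des}, because every repair step removes an incoming edge only at a node where \eqref{equ:nbap_con_flow} is active, namely a non-destination, and the very first deletion already left each destination with a surviving incoming edge. The genuinely delicate point is the source: the cascade stops harmlessly at $s$ since \eqref{equ:nbap_con_flow} there reads $\psi^{(t)}_s\le\phi^{(t)}_s$ and, once \eqref{equ:no_in_source} holds, $\psi^{(t)}_s=0$ forces no further deletion; but I must guarantee the walk never removes the \emph{last} outgoing edge of $s$ and thereby breaks \eqref{equ:nbap_con_source}. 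I would resolve this by choosing, at each repair step where a node has several incoming edges, one whose removal does not strand the source — equivalently, by peeling the cascade along a branch that ends at a destination, or at a non-source node where \eqref{equ:nbap_con_flow} already holds strictly, rather than at the source's unique out-edge. Granting this bookkeeping, the cascaded deletion produces a feasible plan with strictly fewer active edges and no larger cost, contradicting the choice of a minimum-edge optimal plan. Hence that plan obeys \eqref{equ:no_in_source} and \eqref{equ:at_most_one}, which is the Observation.
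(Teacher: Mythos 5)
Your proposal runs on the same engine as the paper's own proof: delete a redundant incoming edge, note that this lowers exactly one egress sample (at the tail) and one ingress sample (at the head) by $w^{(t)}_s$, and invoke monotonicity of $Q_{95}$ via \eqref{equ:obj_fun} to conclude the cost cannot increase. The paper does this edge by edge with an induction; you package it as an extremal argument (an optimal plan with fewest active edges). Your existential reframing is also faithful to what the paper actually establishes: its proof only ever produces ``an equivalent or better'' plan, so it too proves at most that \emph{some} optimal plan satisfies \eqref{equ:no_in_source} and \eqref{equ:at_most_one}, not that every one does. Where you genuinely depart from the paper is the feasibility check: the paper deletes $(i_k,j)$, verifies only that $j$ still receives data, and declares the result a valid plan, silently ignoring that the deletion may violate \eqref{equ:nbap_con_flow} at the tail $i_k$ when that constraint is tight there. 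You caught this, and your cascade is an attempt to repair it.

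However, the ``bookkeeping'' you grant yourself at the end is not merely delicate --- it can be impossible, because a repair by deletions alone need not exist. Take $S^{(t)}=\{s\}$, $D_s^{(t)}=\{j\}$, $E^{(t)}=\{(s,x),(x,s),(w,j)\}$ with ample capacities. Constraint \eqref{equ:nbap_con_source} forces $f^{(t)}_{ssx}=1$; constraint \eqref{equ:nbap_con_flow} at the non-destination $x$ (whose only outgoing edge is $(x,s)$) then forces $f^{(t)}_{sxs}=1$; and \eqref{equ:con_des} forces $f^{(t)}_{swj}=1$. This is the \emph{unique} feasible plan, hence optimal and edge-minimal, and it violates \eqref{equ:no_in_source}. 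Deleting $(x,s)$ triggers exactly your cascade, which is forced to delete $(s,x)$ and thereby break \eqref{equ:nbap_con_source}. The root cause is that the constraint system never requires the transmitted data to actually emanate from $s$: here $w$ ``creates'' the data for $j$ while $s$'s data circulates in a two-cycle, and on such instances the Observation itself fails. So neither your cascade nor any deletion-only scheme (including the paper's) can be completed in general; closing the gap requires either comparing against plans that are not subgraphs of the current one (rerouting, which needs a richness assumption on $E^{(t)}$ such as completeness), or strengthening the model so that every destination's data provably originates at $s$. In short, your proposal is the paper's argument done more carefully, and the added care exposes precisely the step at which both proofs are unsound without further hypotheses.
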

\begin{proof}
As the node $s \in S^{(t)}$ already contains the transmitted data, (\ref{equ:no_in_source}) holds.
To transmit data that is originally from the source node $s \in S^{(t)}$ in the time slot $[t, t+1], t \in T$, consider the node $j \in V\setminus \{s\}$ and a bandwidth allocation plan. If data is not transmitted over $j$, there is
\begin{equation}\label{equ:no_pass}
    \sum_{\substack{(i, j) \in E^{(t)} \\i\in V}}\!\!f^{(t)}_{s i j} = 0.
\end{equation}
If data is transmitted over $j$, and assume that there is
\begin{equation}\label{equ:m_edges}
    \sum_{\substack{(i, j) \in E^{(t)} \\i\in V}}\!\!f^{(t)}_{s i j} = m,
\end{equation}
where $m \geq 2$. Then, (\ref{equ:m_edges}) indicates that there are $m$ edges directed to the node $j$, which are
denoted by $(i_1, j), (i_2, j), \dots, (i_m, j)$, respectively.
Select the node $i_k, k \in \{1, 2, \dots, m\}$ and delete the edge $(i_k, j)$, data can be still transmitted into the node $j$ through the remaining edges as $m\geq 2$.
Besides, in the time slot $[t, t+1], t \in T$, the egress bandwidth of the node $i_k$ decrease by $w_s^{(t)}$ after deletion.
Also, it holds that
\begin{equation}\label{equ:key_eq}
    Q_{95}(\{\phi_{i_k}^{(1)}, \phi_{i_k}^{(2)}, \dots, \phi_{i_k}^{(t)}-w_s^{(t)}, \dots \phi_{i_k}^{(p)}\}) \leq Q_{95}(\{\phi_{i_k}^{(1)}, \phi_{i_k}^{(2)}, \dots, \phi_{i_k}^{(t)}, \dots \phi_{i_k}^{(p)}\}),
\end{equation}
where $\phi_{i_k}^{(t)}$ is the egress bandwidth of the node $i_{k}$ in the time slot $[t, t+1], t \in T$.
Similarly, in the time slot $[t, t+1], t \in T$, the ingress bandwidth of the node $j$ decreases by $w_s^{(t)}$ after deletion.
Also, there is
\begin{equation}\label{equ:key_eq_2}
    Q_{95}(\{\psi_{i_k}^{(1)}, \psi_{i_k}^{(2)}, \dots, \psi_{i_k}^{(t)}-w_s^{(t)}, \dots \psi_{i_k}^{(p)}\}) \leq Q_{95}(\{\psi_{i_k}^{(1)}, \psi_{i_k}^{(2)}, \dots, \psi_{i_k}^{(t)}, \dots \psi_{i_k}^{(p)}\}),
\end{equation}
where $\psi_{i_k}^{(t)}$ is the ingress bandwidth of node $j$ in the time slot $[t, t+1], t \in T$.
According to (\ref{equ:obj_fun}), (\ref{equ:key_eq}) and (\ref{equ:key_eq_2}), we get an equivalent or better bandwidth allocation plan after deletion.
By induction, there is an equivalent or better bandwidth allocation plan than the current one such that
\begin{equation}\label{equ:pass}
    \sum_{\substack{(i, j) \in E^{(t)} \\i\in V}}\!\!f^{(t)}_{s i j} = 1.
\end{equation}
Combining (\ref{equ:no_pass}) and (\ref{equ:pass}) yields (\ref{equ:at_most_one}).
\end{proof}
According to Observation \ref{obs:1}, (\ref{equ:con_des}) can be reformulated as
\begin{equation}
     \sum_{\substack{(i, j) \in E^{(t)} \\i\in V}}\!\!f^{(t)}_{s i j} = 1, \quad \forall s \in S^{(t)}, \forall j \in D_s^{(t)},
\end{equation}
which means that it is sufficient to transmit a copy of data that is originally from the source node $s \in S^{(t)}$ into its destination nodes $D^{(t)}_s$ by just one edge respectively.
\subsection{Integer programming model}\label{sec:IP model}

With the specifications in Section \ref{sec:pro_ide}, the NBA problem can be formulated as an integer programming model as the following:
\begin{gather}
      \min \sum_{i \in V}u_i \cdot max(\{ Q_{95}(\{\sum_{s\in S^{(t)}} \!\!w^{(t)}_s \!\!\cdot\!\!\!\!\!\!\sum_{\substack{(i, j) \in E^{(t)} \\j \in V}}\!\!\!\! f^{(t)}_{s i j}\}^p_{t = 1}), Q_{95}(\{\sum_{s\in S^{(t)}} \!\!w^{(t)}_s \!\!\cdot\!\!\!\!\!\!\sum_{\substack{(h, i) \in E^{(t)} \\h \in V}}\!\!\!\! f^{(t)}_{s h i}\}^p_{t=1})\}) \nonumber
\end{gather}
\begin{align}
         \text{subject to} \quad\quad\quad\quad\quad\sum_{j \in V}f^{(t)}_{s s j} \geq 1, \quad\quad\quad &\forall s \in S^{(t)}, \forall t \in T,\\[11pt]
          \sum_{\substack{(i, j) \in E^{(t)} \\i\in V}}\!\!\!\!f^{(t)}_{s i j} = 1, \quad\quad\quad &\forall s \in S^{(t)}, \forall j \in D_s^{(t)},  \forall t \in T,\\[11pt]
         \sum_{s \in S^{(t)}} (w_s^{(t)} \!\!\cdot\!\!\!\!\!\! \sum_{\substack{(i, j) \in E^{(t)} \\ j \in V}} \!\!\!\!f^{(t)}_{s i j}) \leq c_i^{out}, \quad\quad &\forall i \in V,  \forall t \in T,\\[11pt]
         \sum_{s \in S^{(t)}} (w_s^{(t)}\!\! \cdot\!\!\!\!\!\! \sum_{\substack{(h, i) \in E^{(t)} \\ h \in V}}\!\!\!\! f^{(t)}_{s h i}) \leq c_i^{in}, \quad\quad &\forall i \in V,  \forall t \in T,\\[11pt]
         \sum_{\substack{(i, j) \in E^{(t)} \\i \in V}}\!\!\!\!f^{(t)}_{s i j} -\!\!\!\! \sum_{\substack{(j, k) \in E^{(t)} \\ k \in V}}\!\!\!\!f^{(t)}_{s j k} \leq 0, \!\!\qquad\quad &\forall s \in S^{(t)}, \forall j \in V\setminus D^{(t)}_s,  \forall t \in T,\\[11pt]
         f^{(t)}_{s i j} \in \{0, 1\}, \quad\quad &\forall s \in S^{(t)}, \forall (i, j) \in E^{(t)},  \forall t \in T.
\end{align}
To deal with this problem, the following propositions may be useful for algorithmic design.
\begin{proposition}\label{prop:no_cyc}
Denote a bandwidth allocation plan for transmitting data that is originally from the source node $s \in S^{(t)}$ in the time slot $[t, t+1], t \in T$, by a graph $G_s^{(t)} = (V_s^{(t)}, E_s^{(t)})$, where $V_s^{(t)} = \{i|\{f_{s i j}^{(t)} \geq 1 \text{ or }f_{s j i}^{(t)} = 1, i, j \in V\}, E_s^{(t)} = \{(i, j)| (i, j) \in E^{(t)}, f_{s i j}^{(t)}=1, i, j \in V_s^{(t)}\}$.
Then, there is no cycle in $G_s^{(t)}$.
\end{proposition}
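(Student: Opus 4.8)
The plan is to argue by contradiction and to exploit the in-degree bounds already established in Observation~\ref{obs:1}. Since the bandwidth allocation plan under consideration is the one produced for the optimal model, so that (\ref{equ:no_in_source}) and (\ref{equ:at_most_one}) apply, the graph $G_s^{(t)}$ has the property that the source $s$ has in-degree $0$ while every other node has in-degree at most $1$. I would suppose, for contradiction, that $G_s^{(t)}$ contains a directed cycle $C: v_1 \to v_2 \to \dots \to v_\ell \to v_1$ and then exhibit an equally good (indeed redundancy-free, hence strictly cheaper) feasible plan, contradicting optimality.

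First I would pin down the local structure of such a cycle. Because $s$ has in-degree $0$ by (\ref{equ:no_in_source}), the source cannot lie on $C$. Each node $v_r$ on $C$ receives the cycle edge $(v_{r-1}, v_r)$, so its in-degree is at least $1$; combined with (\ref{equ:at_most_one}) this forces the in-degree of every $v_r$ to be exactly $1$, realized precisely by its cycle edge. Consequently no edge of $E_s^{(t)}$ enters $C$ from outside $C$, and therefore there is no directed path from $s$ into $C$. In other words, the data originating at $s$ never reaches any vertex of the cycle: the edges of $C$ merely recirculate a copy of data that was never injected from the source.

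The second step is a deletion argument in the spirit of the proof of Observation~\ref{obs:1}. I would delete the edges of $C$ from the plan. The source condition (\ref{equ:nbap_con_source}) is untouched since $s \notin C$; the flow condition (\ref{equ:nbap_con_flow}) at each $v_r$ is preserved because its in-degree drops to $0$ while its out-degree is unchanged; and no node reachable from $s$ loses an incoming edge, so every destination is still served. Deleting these edges only decreases certain egress and ingress values, so by the monotonicity of $Q_{95}$ recorded in (\ref{equ:key_eq}) and (\ref{equ:key_eq_2}) the objective (\ref{equ:obj_fun}) does not increase. This produces a feasible plan that is at least as good but uses strictly fewer edges, contradicting the optimality of $G_s^{(t)}$ and completing the proof.

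The step I expect to be the main obstacle is verifying that removing $C$ cannot break a destination constraint. The delicate point is that, formally, the integer program would still deem a ``floating'' cycle feasible even though it delivers nothing; the argument therefore relies on the fact that a genuinely optimal plan routes data to each destination along a directed path from $s$, so that no destination can sit on a cycle that is unreachable from the source. Making this reachability claim airtight — that is, ruling out the possibility that a destination's unique incoming edge (guaranteed by (\ref{equ:at_most_one})) is a cycle edge in an optimal plan — is the crux, and it is exactly where optimality, rather than mere feasibility, must be invoked.
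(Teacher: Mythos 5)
Your first half tracks the paper's proof exactly: both arguments invoke Observation~\ref{obs:1} to conclude that $s$ has in-degree $0$, that every node on the putative cycle has in-degree exactly $1$ realized by its cycle edge (the paper records this as (\ref{equ:pro_pro_1})), hence that $s$ does not lie on the cycle and no edge of the plan enters the cycle from outside. Up to that point you and the paper are in lockstep.

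The gap is in your final step, and it is genuine. The paper does not delete the cycle and appeal to optimality; it derives an immediate contradiction with the plan being a transmission plan at all: since the cycle nodes do not hold the data originally ($s \notin V_c$) and the data must actually be transmitted into every node the plan uses, some cycle node would need an incoming edge from outside the cycle, i.e. in-degree at least $2$, contradicting (\ref{equ:at_most_one}). Your replacement step fails twice. First, if a destination node sits on the cycle, deleting the cycle edges removes that destination's unique incoming edge and violates (\ref{equ:con_des}), so the deleted plan need not be feasible --- you flag this yourself as the crux, but you do not resolve it. Second, even when the deleted plan is feasible, the monotonicity facts (\ref{equ:key_eq}) and (\ref{equ:key_eq_2}) give only a \emph{weak} inequality on the objective (\ref{equ:obj_fun}); ``uses strictly fewer edges'' is not a contradiction with optimality, because edge count is not part of the objective, and your parenthetical ``hence strictly cheaper'' is unjustified. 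Indeed optimality alone cannot close the gap: if the slot $t$ lies among the discarded top $5\%$ samples of every node involved, a floating cycle changes no $Q_{95}$ value, so the integer program literally admits optimal solutions containing such cycles. What rules them out is the definitional requirement --- used by the paper --- that a bandwidth allocation plan for data originating at $s$ must deliver that data to every node appearing in the plan, which forces an external edge into the cycle and collides with (\ref{equ:at_most_one}). A minor further remark: the paper proves acyclicity of the \emph{underlying undirected} graph (which Proposition~\ref{prop:forest} needs), while you treat only directed cycles; this is harmless, since under (\ref{equ:at_most_one}) any undirected cycle must in fact be consistently oriented, but it deserves a sentence.
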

\begin{proof}
We prove it by contradiction.
Let $\mathcal{G}_s^{(t)}$ be the underlying undirected graph of $G_s^{(t)}$.
Assume that there is a cycle contained in $\mathcal{G}_s^{(t)}$. Without loss of generality, we denote it by
\begin{equation}\label{equ:cyc}
    (i_u, i_{u+1}) \to \cdots \to (i_v, i_{u}).
\end{equation}
Also, we denote by $V_{c}$ the set of nodes contained in the cycle (\ref{equ:cyc}). Then, it follows from (\ref{equ:at_most_one}) that
\begin{equation}\label{equ:pro_pro_1}
    \sum_{\substack{(i, j) \in E^{(t)} \\i\in V}}\!\!\!\!f^{(t)}_{s i j}= 1, \quad \forall j \in V_c.
\end{equation}
By (\ref{equ:no_in_source}) and (\ref{equ:pro_pro_1}), we have $s \notin V_c$, which means that nodes in $V_c$ do not contain transmitted data originally.
Then, to transmit data into the cycle (\ref{equ:cyc}) from $V_a^{(t)}\setminus V_c$, there must be at least one node $j \in V_c$ such that
\begin{equation}
     \sum_{\substack{(i, j) \in E^{(t)} \\i\in V}}\!\!\!\!f^{(t)}_{s i j} > 1,
\end{equation}
which is a contradiction to (\ref{equ:pro_pro_1}).
Therefore, there is no cycle in $\mathcal{G}_s^{(t)}$.
Moreover, there is no cycle in $G_s^{(t)}$.
\end{proof}
\begin{proposition}\label{prop:forest}
Denote a bandwidth allocation plan for transmitting data that is originally from the source node $s \in S^{(t)}$ in the time slot $[t, t+1], t \in T$, by a graph $G_s^{(t)} = (V_s^{(t)}, E_s^{(t)})$, where $V_s^{(t)} = \{i|f_{s i j}^{(t)} \geq 1 \text{ or }f_{s j i}^{(t)} = 1, i, j \in V\}, E_s^{(t)} = \{(i, j)| (i, j) \in E^{(t)}, f_{s i j}^{(t)}=1, i, j \in V_s^{(t)}\}$.
Then, $G_s^{(t)}$ is a directed tree.
\end{proposition}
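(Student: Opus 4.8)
The plan is to show that $G_s^{(t)}$ is an out-tree (arborescence) rooted at $s$ by combining three properties: acyclicity, a rigid in-degree pattern, and connectivity. Acyclicity is already available from Proposition~\ref{prop:no_cyc}. The in-degree pattern comes from Observation~\ref{obs:1}: equation~(\ref{equ:no_in_source}) says $s$ has in-degree $0$, and equation~(\ref{equ:at_most_one}) forces every other vertex of $V_s^{(t)}$ to have in-degree at most $1$. I would then invoke the standard fact that a digraph whose underlying undirected graph is acyclic, in which one distinguished vertex has in-degree $0$ and every other vertex has in-degree exactly $1$, is necessarily an out-tree rooted at that vertex. Thus the whole proof reduces to upgrading ``at most $1$'' to ``exactly $1$'' for all non-source vertices.

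First I would note that this upgrade is equivalent to a connectivity statement. By~(\ref{equ:at_most_one}) every vertex with in-degree $1$ has a unique parent along its in-edge; starting from an arbitrary $v \in V_s^{(t)}$ and following parents backward cannot repeat a vertex (Proposition~\ref{prop:no_cyc}) and so terminates at some in-degree-$0$ vertex $r$. If I can prove $r = s$ for every choice of $v$, then every vertex is reachable from $s$, the graph is connected, $s$ is the unique in-degree-$0$ vertex, and the edge count satisfies $|E_s^{(t)}| = |V_s^{(t)}| - 1$, which together with acyclicity delivers the tree.

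The substantive step is therefore to exclude a ``phantom root'' $r \neq s$ with in-degree $0$. Because $r \in V_s^{(t)}$, the definition of $V_s^{(t)}$ forces $r$ to have out-degree at least $1$, and by Proposition~\ref{prop:no_cyc} and~(\ref{equ:at_most_one}) the vertices reachable from $r$ form a subtree attached to the rest of the plan only below $r$. When this subtree contains no destination, I would argue exactly as in the proof of Observation~\ref{obs:1}: deleting all of its edges keeps the source constraint~(\ref{equ:nbap_con_source}) and the remaining destination and capacity constraints intact, while removing positive contributions to the egress and ingress profiles, so the monotonicity of $Q_{95}$ recorded in~(\ref{equ:key_eq}) and~(\ref{equ:key_eq_2}) shows the objective~(\ref{equ:obj_fun}) does not increase, contradicting optimality (or yielding an equivalent plan with strictly fewer edges, to which the same step reapplies).

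The hard part will be the case in which the subtree below the phantom root $r$ contains a destination $d \in D_s^{(t)}$, since then the deletion above would violate~(\ref{equ:con_des}). This is where the modeling semantics must enter: a non-source vertex should only be able to forward data that originates at $s$, so a destination served solely through $r$ would be receiving data that was never actually sourced. I would try to make this rigorous by a propagation argument along parent chains --- the meaning of $f_{sij}^{(t)} = 1$ is that $(i,j)$ carries $s$'s data, which requires $i$ to already possess it, so possession propagates only outward from $s$. The delicate point, and the one I expect to be the main obstacle, is that the replication constraint~(\ref{equ:nbap_con_flow}) only bounds a relay's in-degree by its out-degree and does not by itself enforce ``receive before forward''; hence ruling out a destination-carrying phantom component cannot be done from the listed constraints alone and must be justified from the data-origination semantics of the model.
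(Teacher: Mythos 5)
Your route is genuinely different from the paper's, and considerably more careful. The paper's entire proof is two lines: it cites Proposition~\ref{prop:no_cyc} to conclude that the underlying undirected graph is one in which ``any two nodes are connected by exactly one path,'' and then invokes the definition of a tree --- that is, it passes from acyclicity directly to treeness, silently assuming connectivity, which is exactly the property you isolate as the crux. Your decomposition (acyclicity from Proposition~\ref{prop:no_cyc}, the in-degree pattern from (\ref{equ:no_in_source}) and (\ref{equ:at_most_one}), then exclusion of a phantom root $r \neq s$) supplies the argument the paper would need to be complete, and your deletion argument for a destination-free phantom component correctly mirrors the optimality argument in the proof of Observation~\ref{obs:1}. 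Moreover, the obstacle you flag at the end is real and is not a defect of your proof: a component disconnected from $s$ whose sinks are all destinations satisfies every listed constraint --- (\ref{equ:nbap_con_source}) is handled in $s$'s own component, (\ref{equ:con_des}) can be met by in-edges inside the phantom component, and (\ref{equ:nbap_con_flow}) exempts destinations --- so connectivity genuinely cannot be derived from the integer program alone. The paper leans on the same unstated data-origination semantics: its proof of Proposition~\ref{prop:no_cyc} asserts that data must be transmitted ``into the cycle'' from outside, which is likewise not a consequence of the constraints. In short, your approach buys an explicit identification of where the model's semantics (a node may forward $s$'s data only after receiving it) must be invoked, plus a complete treatment of the destination-free case; the paper's approach buys brevity at the cost of burying both the connectivity step and that semantic assumption.
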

\begin{proof}
Consider $\mathcal{G}_s^{(t)}$, which is the underlying undirected graph of $G_s^{(t)}$.
By Proposition \ref{prop:no_cyc}, $\mathcal{G}_s^{(t)}$ is an undirected graph in which any two nodes are connected by exactly one path.
Therefore, by the definition of tree, $\mathcal{G}_s^{(t)}$ is an undirected tree.
Moreover, $G_s^{(t)}$ is a directed tree.
\end{proof}

\section{Real applications}\label{rea_ins}

In this section, we show some major real-world cloud computing scenarios that are applications of the NBA problem.
The scenarios include the content delivery network (CDN), the live video delivery network (LVDN), the real-time communication network (RTCN), and the cloud wide area network (Cloud-WAN).

\subsection{Content delivery network}
\subsubsection{Background}
The content delivery network (CDN) is a geographically distributed network of servers.
The goal is to decrease latency and thus improve QoS by distributing the service spatially relative to customers, as a shorter transmission distance usually means lower latency \cite{dilley2002globally}.
\begin{figure}[!ht]
    \centering
    \includegraphics[width=0.76\linewidth]{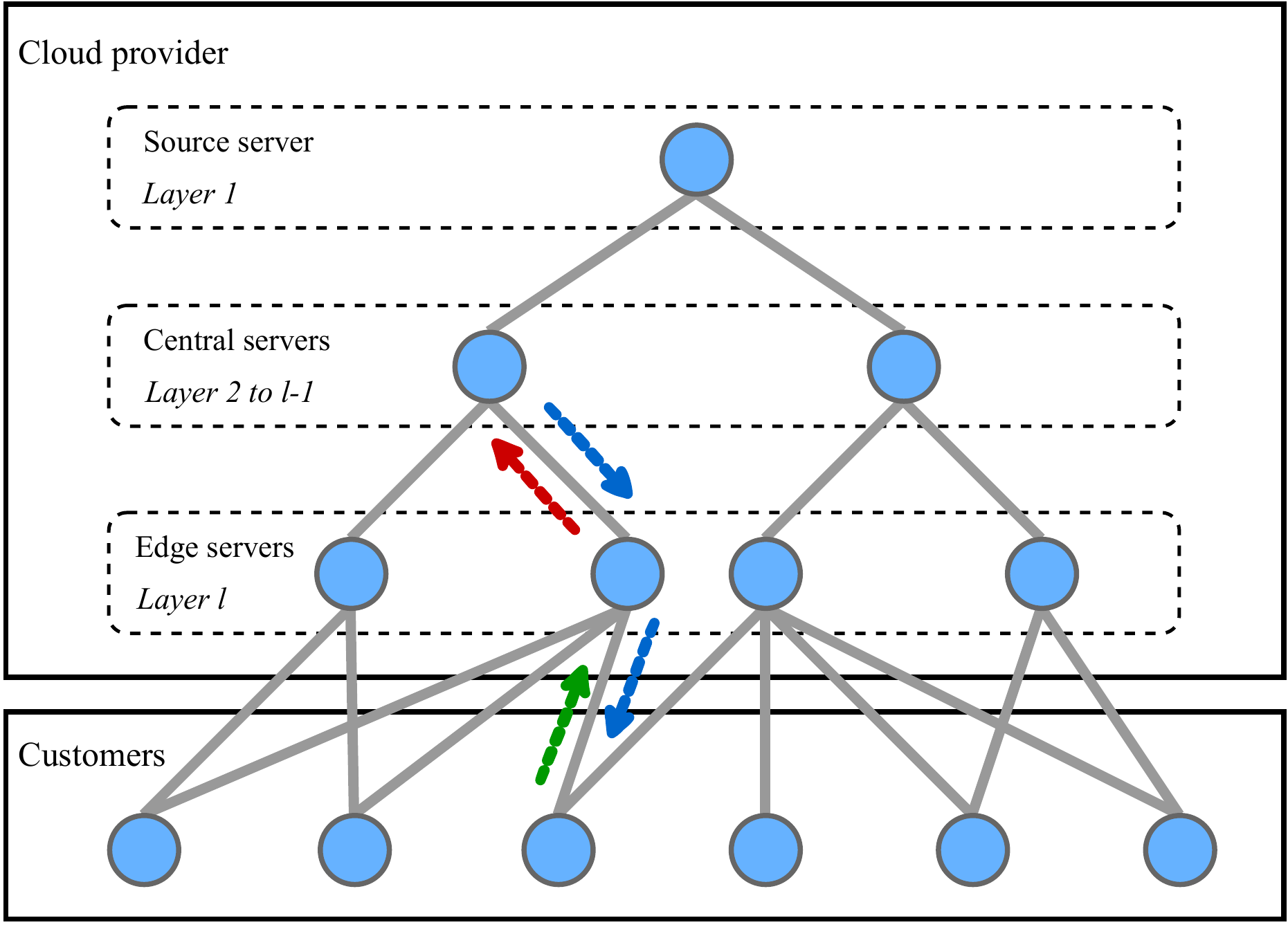}
    \caption{The logical architecture of the content delivery network (CDN).}
    \label{fig:cdn}
\end{figure}
Figure \ref{fig:cdn} shows the logical architecture of the CDN.
For the CDN, there are two major components:
\begin{itemize}[noitemsep]
    \item Server nodes of the cloud provider, whose network is a tree;
    \item Customers that use various devices.
\end{itemize}
For servers of the cloud provider whose network topology is a tree, there are three categories: (1) the source server; (2) central servers; (3) edge servers.
Let the source server be the root of the tree, and there be $l$ layers.
The contents are imported into the CDN through the source server; central servers forward contents layer by layer; and edge servers are responsible for transmitting content to customers.
As contents can be replicated and cached within servers, it is unnecessary to build a transmitting path from the source server to customers at all times.
In other words, it only needs to build a transmitting path from a server that already cached the desired contents to customers.
Therefore, the process to build the transmitting path for a customer can be separated into the following two steps:
\begin{itemize}[noitemsep]
    \item Step 1: build a connection between this customer and an appropriate edge server (the green arrow in Figure \ref{fig:cdn});
    \item Step 2: build connections from the edge server to an upper server recursively until the desired contents are available (the red arrow in Figure \ref{fig:cdn}).
\end{itemize}
Once the transmitting path is built by these two steps, the desired contents can be delivered to the customer through this path (the blue arrow in Figure \ref{fig:cdn}).
For Step 2, the path is deterministic with the given start and end, since any two nodes of a tree are connected by exactly one path.
Therefore, decisions of bandwidth allocation are actually made at Step 1 for edge servers.
The bandwidth allocation of the source server and central servers can be estimated by the bandwidth allocation to edge servers and the occurrence rate of Step 2. In the CDN, the egress bandwidth of a server is usually greater than its ingress bandwidth. Therefore, the amount of bandwidth for the charge is counted by the amount of egress bandwidth.
According to these specific attributes, we extend the NBA problem for the CDN scenario, which is named CDN Bandwidth Allocation (CDN-BA) problem.
\subsubsection{Formulation}
Consider a CDN-BA problem which is defined on $G = (V, E)$ during a billing cycle $P$, where $G$ is a tree; $V = \{1, 2, \dots, n\}$ is the set of servers; $E =\{(i, j)|i, j \in V, i \neq j\}$ is the set of edges abstracted from network links; and $P$ is a given period of time separated by a set of sampling time points $T = \{1, 2, \dots, p\}$.
Components are given as follows.

\textbf{Parameters:}
\begin{itemize}[leftmargin=*]
    \item $V_s \subseteq V$ is the set of the source server and central servers; the source server is the root node of $G$, and central servers are internal nodes of $G$.
    \item $V_e = V \setminus V_s$ is the set of edge servers; the edge servers are leaf nodes of $G$.
    \item $D^{(t)} = \{n+1, n+2, \dots, n+m^{(t)}\}$ is the set of customers in the time slot $[t, t+1], t \in T$.
    \item $E^{(t)} = \{(i, j)|i \in V_e, j \in D^{(t)}\}$ is the set of available connections among edge servers and customers in the time slot $[t, t+1], t \in T$.
    \item $u_i > 0$ is the unit-price of bandwidth of the server $i \in V$.
    \item $c_i > 0$ is the egress bandwidth capacity of the server $i \in V$.
    \item $w^{(t)}_i > 0$ is the least bandwidth required to deliver the contents between an edge server and the customer $i \in D^{(t)}$ in the time slot $[t, t+1], t \in T$.
    \item $r_i^{(t)} \in [0, 1]$ is the probability that the desired content is not cached at the server $i \in V$ in the time slot $[t, t+1], t \in T$.
\end{itemize}
\textbf{Decision variables}:

$f^{(t)}_{i j} = 1$ if an edge $(i, j) \in E^{(t)}$ is contained in the bandwidth allocation plan to transmit data in the time slot $[t, t+1], t \in T$; and $0$ otherwise.

\textbf{Objective function:}

The total bandwidth cost during $P$ should be minimized. The bandwidth cost of edge servers is measured exactly, while the bandwidth cost of central servers and the source server in the time slot $[t, t+1], t \in T$ is estimated through $r_i^{(t)}, i \in V$.
Denote the egress bandwidth distribution of the edge server $i \in V_e$ over $T$ by
\begin{equation}
    B_i = \{\sum_{\substack{(i, j) \in E^{(t)} \\j \in D^{(t)}}} \!\!\!\!w_j^{(t)}\!\! \cdot f^{(t)}_{i j}\}^p_{t = 1}.
\end{equation}
Let $Q_{95}(\cdot)$ be the operation to get the $95^{th}$ percentile of a set of numbers. The cost of all edge severs is represented by
\begin{equation}
    \sum_{i \in V_e}u_i \cdot Q_{95}(B_i).
\end{equation}
Then, the bandwidth of central servers and the source server can be estimated layer by layer.
For example, let $k \in V_s$ be a server node whose children are edge server nodes. Then, the egress bandwidth of $k$ in the time slot $[t, t+1], t \in T$ is estimated by
\begin{equation}
    \sum_{(k, i) \in E} (r_i^{(t)} \!\!\cdot\!\!\!\!\!\! \sum_{\substack{(i, j) \in E^{(t)} \\j \in D^{(t)}}} \!\!w_j^{(t)}\!\! \cdot f^{(t)}_{i j}).
\end{equation}
For simplicity, we denote the egress bandwidth of $k \in V_s$ in the time slot $[t, t+1], t \in T$ by $b_k^{(t)}$, which is obtained by an operation $R$ given the egress bandwidth of all edge servers
and $\{r_i^{(t)}\}_{i \in V}$:
\begin{equation}
    b_k^{(t)} = R(k, \{\sum_{\substack{(i, j) \in E^{(t)} \\j \in D^{(t)}}}\!\!\!\! w_j^{(t)}\!\! \cdot f^{(t)}_{i j}\}_{i \in V_e}, \{r_i^{(t)}\}_{i \in V}).
\end{equation}
Then, the total cost of central servers and the source server is denoted by
\begin{equation}
    \sum_{k \in V_s}u_k \cdot Q_{95}(\{b_k^{(t)}\}_{t=1}^{p}).
\end{equation}
Therefore, the goal is
\begin{equation}
    \min \sum_{i \in V_e}u_i \cdot Q_{95}(B_i) + \sum_{k \in V_s}u_k \cdot Q_{95}(\{b_k^{(t)}\}_{t=1}^{p}).
\end{equation}

\textbf{Constraints:}
\begin{itemize}[leftmargin=*]
    \item Data can be delivered into all customer nodes from edge servers in the time slot $[t, t+1], t \in T$:
\begin{equation}
    \sum_{\substack{(i, j) \in E^{(t)} \\i\in V_e}}\!\!\!\!f^{(t)}_{i j} = 1, \quad \forall j \in D^{(t)}.
\end{equation}
    \item The egress bandwidth of a server node does not exceed its egress bandwidth capacity:
\begin{equation}
     \sum_{\substack{(i, j) \in E^{(t)} \\ j \in D^{(t)}}}\!\!\!\!w_j^{(t)} f^{(t)}_{i j}\leq c_i, \quad \forall i \in V_e;
\end{equation}
\begin{equation}
     R(k, \{\sum_{\substack{(i, j) \in E^{(t)} \\j \in D^{(t)}}}\!\!\!\! w_j^{(t)} \!\!\cdot f^{(t)}_{i j}\}_{i \in V_e}, \{r_i^{(t)}\}_{i \in V})\leq c_i, \quad \forall k \in V_s.
\end{equation}
\end{itemize}

\begin{remark}
For the CDN, there are some servers that have already cached the desired content. Once the connection between a customer and an edge server is built, a path to transmit desired contents is established recursively.
Therefore, the constraint (\ref{equ:nbap_con_source}) in the NBA problem is always satisfied and thus it can be omitted in the CDN scenario.
\end{remark}
\begin{remark}
As the structure of the CDN is a tree, by Proposition \ref{prop:forest}, the constraint (\ref{equ:nbap_con_flow}) in the NBA problem is always satisfied and thus it can also be omitted in the CDN scenario.
\end{remark}

Based on the discussions above, the CDN-BA problem is formulated as
\begin{equation}
    \begin{aligned}
    \min \quad &\sum_{i \in V_e}u_i \cdot Q_{95}( \{\sum_{\substack{(i, j) \in E^{(t)} \\j \in D^{(t)}}} \!\!\!\!w_j^{(t)}\!\! \cdot f^{(t)}_{i j}\}^p_{t = 1}) \\
    &+ \sum_{k \in V_s}u_k \cdot Q_{95}(\{R(k, \{ \sum_{\substack{(i, j) \in E^{(t)} \\j \in D^{(t)}}} \!\!\!\!w_j^{(t)} \!\!\cdot f^{(t)}_{i j}\}_{i \in V_e}, \{r_i^{(t)}\}_{i \in V})\}_{t=1}^{p}) \nonumber
    \end{aligned}
\end{equation}
\begin{align}
    \text{subject to}\quad\quad\quad\quad\quad\!\!\!\sum_{\substack{(i, j) \in E^{(t)} \\i\in V_e}}f^{(t)}_{i j} = 1, \quad\quad\quad\quad\quad\quad &\forall j \in D^{(t)}, \forall t \in T, \\
    \sum_{\substack{(i, j) \in E^{(t)} \\ j \in D^{(t)}}}\!\!\!\!w_j^{(t)} f^{(t)}_{i j}\leq c_i, \quad\quad\quad\quad\quad\quad\!\! &\forall i \in V_e, \forall t \in T, \\
    R(k, \{\sum_{\substack{(i, j) \in E^{(t)} \\j \in D^{(t)}}}\!\!\!\!w_j^{(t)} \!\!\cdot f^{(t)}_{i j}\}_{i \in V_e}, \{r_i^{(t)}\}_{i \in V})\leq c_k, \quad &\forall k \in V_s, \forall t \in T,\\
    f^{(t)}_{i j} \in \{0, 1\}, \quad\quad\quad\quad\quad\quad &\forall (i, j) \in E^{(t)}, \forall t \in T.
\end{align}

\subsection{Live video delivery network}

\subsubsection{Background}
Live video means an online video that is recorded and transmitted over a network in real-time.
Delivery of live video is challenging because of the unavailability of cache technology and the low latency requirement.
With the increment of network bandwidth, many applications related to live video delivery have gradually become into reality such as live news, live shows, and live courses.
Recent surveys in \cite{ globe2017, elements2019} have shown that Internet users had watched an accumulation of 1.1 billion hours of live video in 2019 and the traffic of live video had reached 82$\%$ of overall Internet traﬃc by the end of 2020. Also, it is estimated in \cite{news2021} that the video streaming market will hit 223.98 billion dollars by 2028.
The diversity and volume of live video bring not only business opportunities, but also scientific challenges to save bandwidth cost for cloud providers.
\begin{figure}[!ht]
    \centering
    \includegraphics[width=0.76\linewidth]{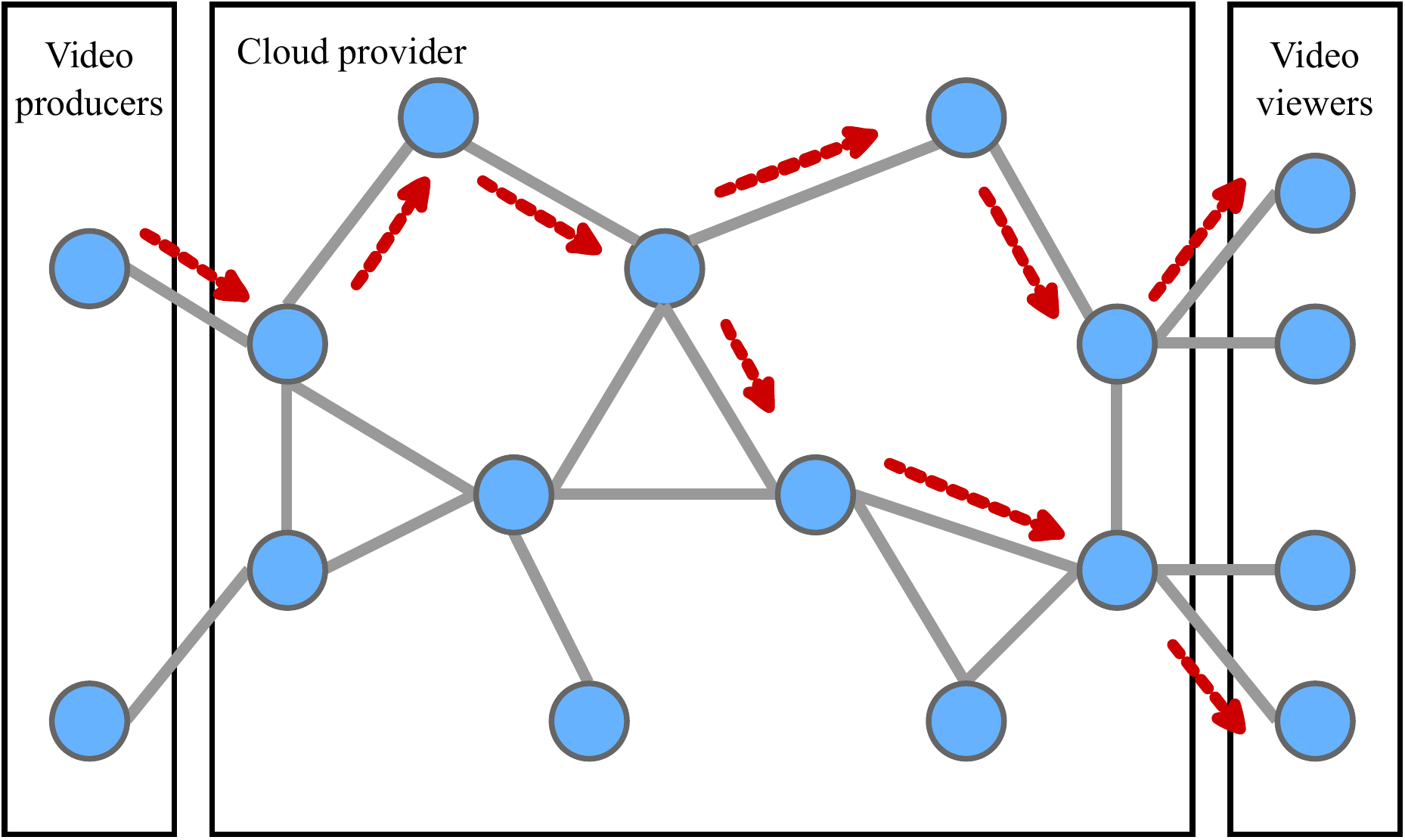}
    \caption{The logical architecture of the live video delivery network (LVDN).}
    \label{fig:lvdn}
\end{figure}

Figure \ref{fig:lvdn} shows the logical architecture of the live video delivery network (LVDN).
There are three major stakeholders in the LVDN: (1) video producers; (2) the cloud provider; (3) video viewers.
Video producers create live videos in various roles such as live news reporters, actors in live shows, and teachers in live courses.
Cloud providers such as Amazon, Microsoft, Alibaba, and Huawei pay to ISPs to deliver live video from video producers to viewers.
For example, red arrows in Figure \ref{fig:lvdn} represent paths to transmit a live video from a video producer to viewers through the cloud provider's servers.
In the process of live video delivery, video producers and viewers demand high quality of video (e.g., video resolution, video frame rate, color depth), instant accessibility, and low buffering ratios, while the cloud provider targets to provide QoS guaranteed transmission service with minimized bandwidth cost.

To cast the live video delivery problem into the NBA problem, a video producer in the LVDN is a source node, and the related viewers are destination nodes.
It means that the servers of cloud providers are intermediary nodes, and the ingress bandwidth of them is always no more than their egress bandwidth considering the data replication ability of servers.
Therefore, the amount of bandwidth for the charge is counted by the amount of egress bandwidth.
Besides, a video producer is admissible to access only one server to upload
video in this scenario.
By grouping video producers and viewers appropriately, we extend the NBA problem for the LVDN scenario, which is named LVDN Bandwidth Allocation (LVDN-BA) Problem.

\subsubsection{Formulation}
Consider a LVDN-BA problem which is defined on $G = (V, E)$ during a billing cycle $P$, $V = \{1, 2, \dots, n\}$ is the set of servers; $E = \{(i, j)|i, j \in V, i \neq j\}$ is the set of edges abstract from inner-domain network links; and $P$ is a given period of time separated by a set of sampling time points $T = \{1, 2, \dots, p\}$.
Components are given as follows.

\textbf{Parameters:}
\begin{itemize}[leftmargin=*]
    \item $S^{(t)} = \{n+1, n+2, \dots, n+m^{(t)}\}$ is the set of video producers in the time slot $[t, t+1], t \in T$.
    \item $D_s^{(t)} \subseteq \mathbb{Z}^{+}\setminus(V \cup S^{(t)})$ is the set of viewers of the video producer $s \in S^{(t)}$ in the time slot $[t, t+1], t \in T$, where $\mathbb{Z}^{+}$ is the set of positive integers.
    \item $V^{(t)}$ represents $V\cup S^{(t)}\cup (\bigcup_{s \in S^{(t)}}D_s^{(t)})$.
    \item $E^{(t)} = \{(i, j)|i, j \in V^{(t)}, i \neq j\}$ is the set of available connections among servers, video producers, and video viewers in the time slot $[t, t+1], t \in T$.
    \item $u_i > 0$ is the unit-price of bandwidth of the server $i \in V$.
    \item $c_i > 0$ is the egress bandwidth capacity of the server $i \in V$.
    \item $w^{(t)}_s > 0$ is the least bandwidth required to deliver live video created by video producer $s \in S^{(t)}$ in the time slot $[t, t+1], t \in T$.
\end{itemize}
\textbf{Decision variables}:

$f^{(t)}_{s i j} = 1$ if an edge $(i, j) \in E^{(t)}$ is contained in the bandwidth allocation plan to deliver the live video created by the video producer $s \in S^{(t)}$ in the time slot $[t, t+1], t \in T$; and $0$ otherwise.
\newpage
\textbf{Objective function:}

The total bandwidth cost during $P$ should be minimized.
Denote the egress bandwidth distribution of node $i \in V$ over $T$ by
\begin{equation}
    B_i = \{\sum_{s \in S^{(t)}}(w_s^{(t)}\!\! \cdot\!\!\!\!\!\!\sum_{\substack{(i, j) \in E^{(t)} \\j \in V^{(t)}}} \!\!\!\!f^{(t)}_{s i j})\}^p_{t = 1}.
\end{equation}
Let $Q_{95}(\cdot)$ be the operation to get the $95^{th}$ percentile of a set of numbers. The goal is represented by
\begin{equation}
    \min \sum_{i \in V}u_i \cdot Q_{95}(B_i).
\end{equation}

\textbf{Constraints:}
\begin{itemize}[leftmargin=*]
    \item Data can be transmitted out from video producers in the time slot $[t, t+1], t \in T$:
\begin{equation}
    \sum_{j \in V}f^{(t)}_{s s j} = 1, \quad \forall s \in S^{(t)}.
\end{equation}
Note that a video producer is admissible to access only one server to upload video in this scenario.
    \item Data can be transmitted into all viewer nodes in the time slot $[t, t+1], t \in T$:
\begin{equation}
    \sum_{\substack{(i, j) \in E^{(t)} \\i\in V}}\!\!\!\!f^{(t)}_{s i j} = 1, \quad\forall s \in S^{(t)}, \forall j \in D_s^{(t)}.
\end{equation}
    \item The egress bandwidth of a server node does not exceed its egress bandwidth capacity:
\begin{equation}
    \sum_{s \in S^{(t)}}(w_s^{(t)}\!\! \cdot\!\!\!\!\!\!\sum_{\substack{(i, j) \in E^{(t)} \\j \in V^{(t)}}}\!\!\!\! f^{(t)}_{s i j})\leq c_i, \quad \forall i \in V.
\end{equation}
    \item The ingress bandwidth of a server node in the path to transmit data that is originally from the same video producer is no more than its egress bandwidth:
\begin{equation}
    \sum_{\substack{(i, j) \in E^{(t)} \\i\in V^{(t)}}}\!\!\!\!f^{(t)}_{s i j} \leq\!\!\!\! \sum_{\substack{(j, k) \in E^{(t)} \\ k \in V^{(t)}}}\!\!\!\!f^{(t)}_{s j k}, \quad \forall s \in S^{(t)}, \forall j \in V.
\end{equation}
\end{itemize}

Based on the discussions above, the LVDN-BA problem is formulated as
\begin{gather}
        \min \sum_{i \in V}u_i \cdot Q_{95}(\{\sum_{s\in S^{(t)}}(w_s^{(t)}\!\! \cdot\!\!\!\!\!\!\sum_{\substack{(i, j) \in E^{(t)} \\j \in V^{(t)}}}\!\!\!\! f^{(t)}_{s i j})\}^p_{t = 1}) \nonumber
\end{gather}
\begin{align}
         \text{subject to}\quad\quad\quad\quad\quad\sum_{j \in V}f^{(t)}_{s s j} = 1, \quad\quad\quad & \forall s \in S^{(t)}, \forall t \in T, \\
         \sum_{\substack{(i, j) \in E^{(t)} \\i\in V}}\!\!\!\!f^{(t)}_{s i j} = 1, \quad\quad\quad &\forall s \in S^{(t)}, \forall j \in D_s^{(t)}, \forall t \in T, \\
          \sum_{s \in S^{(t)}}(w_s^{(t)}\!\! \cdot\!\!\!\!\!\!\sum_{\substack{(i, j) \in E^{(t)} \\j \in V^{(t)}}}\!\!\!\! f^{(t)}_{s i j})\leq c_i, \quad\quad\quad\!\! & \forall i \in V, \forall t \in T,\\
         \sum_{\substack{(i, j) \in E^{(t)} \\i\in V^{(t)}}}\!\!\!\!f^{(t)}_{s i j} -\!\!\!\! \sum_{\substack{(j, k) \in E^{(t)} \\ k \in V^{(t)}}}\!\!\!\!f^{(t)}_{s j k} \leq 0, \quad\quad\quad\!\! & \forall s \in S^{(t)}, \forall j \in V, \forall t \in T, \\
         f^{(t)}_{s i j } \in \{0, 1\}, \quad\quad\quad\!\!\!\! & \forall s \in S^{(t)}, \forall (i, j) \in E^{(t)}, \forall t \in T.
\end{align}
\begin{remark}
The process of live video delivery can be separated into two steps: (1) build connections between customers (video producers and viewers) and servers; (2) build a path with the given start and end among servers. These two steps are intrinsically related, while we may consider solving them individually in a ``divide-and-conquer'' manner because it is too challenging to solve them as a whole.
\end{remark}

\subsection{Real-time communication network}

\subsubsection{Background}
Real-time communication (RTC) is a mode of interchanging information, where participants exchange information instantly or with negligible latency.
The transmitted contents include audio, videos, texts, files, and so on.
Developments of RTC technologies have boosted various application scenarios into reality, e.g., video chats, live conferences, smart factory, and cloud gaming. Many cloud computing business have been thus created. For cloud providers supplying RTC service, saving the bandwidth cost is also a major task, along with other purposes such as increasing the reliability and capability of the network transmission.
\begin{figure}[!ht]
    \centering
    \includegraphics[width=0.76\linewidth]{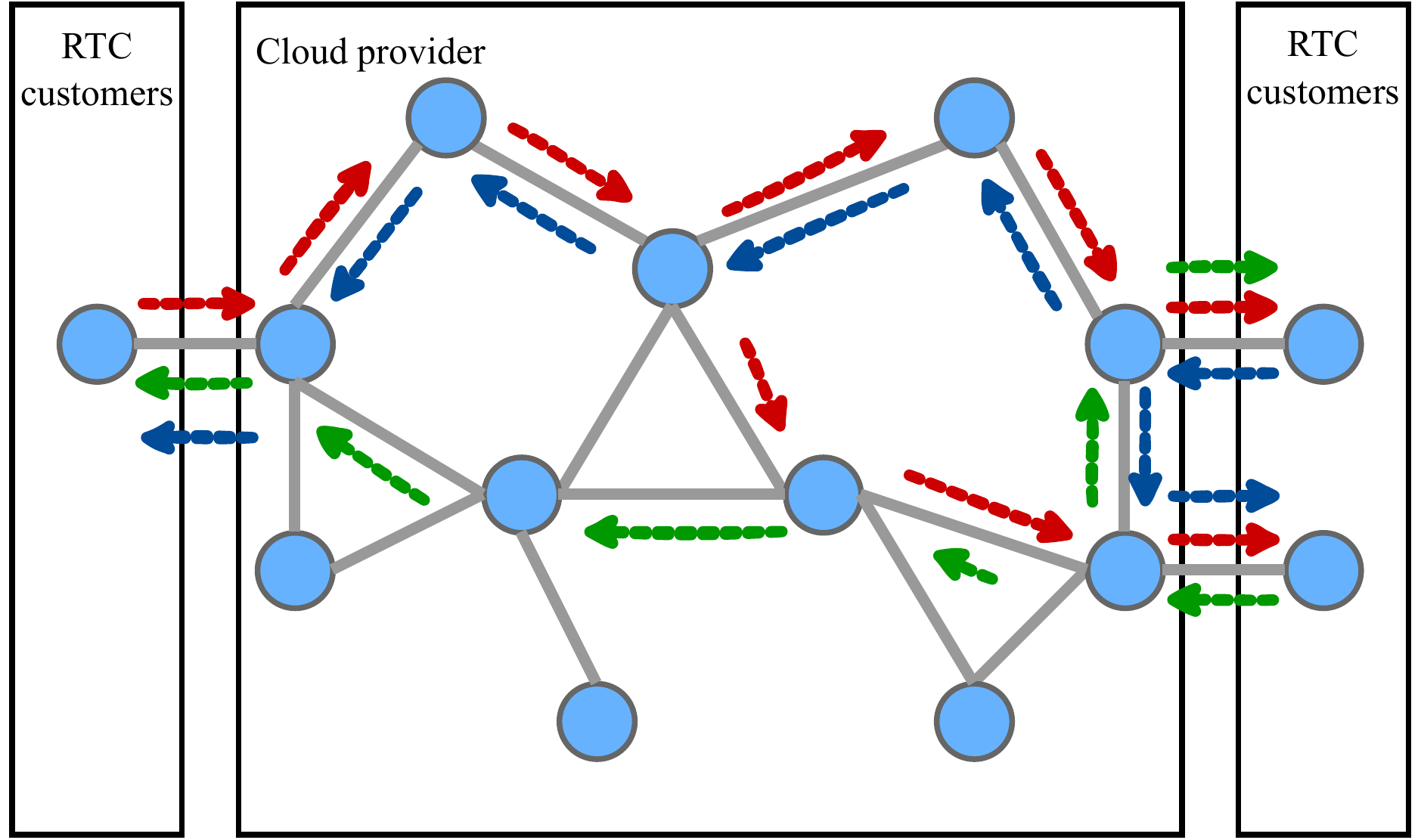}
    \caption{The logical architecture of the real-time communication network (RTCN).}
    \label{fig:rtc}
\end{figure}

Figure \ref{fig:rtc} shows the logical architecture of the real-time communication network (RTCN).
As shown in Figure \ref{fig:rtc}, three customers are geographically distributed.
Assume that all of them are in the same RTC group (e.g., a live conference), the cloud provider needs to build peer-to-peer paths for the customers to exchange information.
In Figure \ref{fig:rtc}, example paths are labeled by arrows with different colors.
Besides, an RTC participant is admissible to access only one server to upload contents in this scenario.
In analogy with LVDN, RTC participants play roles in both video producers and viewers.
Therefore, the LVDN-BA problem can be applied in an RTC group by regarding every participant as a video producer while the remaining participants viewers. According to this idea, we extend the NBA problem for the RTCN scenario, which is named RTCN Bandwidth Allocation (RTCN-BA) problem.

\subsubsection{Formulation}
Consider an RTCN-BA problem which is defined on $G = (V, E)$ during a billing cycle $P$, where $V = \{1, 2, \dots, n\}$ is the set of servers; $E = \{(i, j)|i, j \in V, i \neq j\}$ is the set of edges abstracted from network links; and $P$ is a given period of time separated by a set of sampling time points $T = \{1, 2, \dots, p\}$.
Components are given as follows.

\textbf{Parameters:}
\begin{itemize}[leftmargin=*]
    \item $S^{(t)} = \{n+1, n+2, \dots, n+m^{(t)}\}$ is the set of all RTC participants in the time slot $[t, t+1], t \in T$.
    \item $A^{(t)} = \{A_1^{(t)}, A_2^{(t)}, \dots, A_{g^{(t)}}^{(t)}\}$,  where $A_1^{(t)}, A_2^{(t)}, \dots, A_{g^{(t)}}^{(t)} \subseteq S^{(t)}$ are RTC groups in the time slot $[t, t+1], t \in T$.
    \item $V^{(t)}$ represents $V \cup S^{(t)}$.
    \item $E^{(t)} = \{(i, j)|i, j \in V^{(t)}, i\neq j\}$ is the set of available connections among RTC participants and servers in the time slot $[t, t+1], t \in T$.
    \item $u_i > 0$ is the unit-price of bandwidth of the server $i \in V$.
    \item $c_i > 0$ is the egress bandwidth capacity of the server $i \in V$.
    \item $w^{(t)}_s > 0$ is the least bandwidth required to deliver contents created by the participant $s \in S^{(t)}$ in the time slot $[t, t+1], t \in T$.
\end{itemize}

\textbf{Decision Variables:}

$f^{(t)}_{s i j} = 1$ if an edge $(i, j) \in E^{(t)}$ is contained in the bandwidth allocation plan to deliver contents created by the participant $s \in S^{(t)}$ in the time slot $[t, t+1], t \in T$; and $0$ otherwise.

\textbf{Objective Function:}

The total bandwidth cost during the billing cycle $P$ should be minimized.
Denote the egress bandwidth distribution of node $i \in V$ over $T$ by
\begin{equation}
    B_i = \{\sum_{s \in S^{(t)}}(w_s^{(t)}\!\! \cdot\!\!\!\!\!\!\sum_{\substack{(i, j) \in E^{(t)} \\j \in V^{(t)}}}\!\!\!\! f^{(t)}_{s i j})\}^p_{t = 1}.
\end{equation}
Let $Q_{95}(\cdot)$ be the operation to get the $95^{th}$ percentile of a set of numbers. The goal is represented by
\begin{equation}
    \min \sum_{i \in V}u_i \cdot Q_{95}(B_i).
\end{equation}
\textbf{Constraints:}
\begin{itemize}[leftmargin=*]
    \item Data can be transmitted out from all RTC participants in the time slot $[t, t+1], t \in T$:
\begin{equation}
    \sum_{j \in V}f^{(t)}_{s s j} = 1, \quad \forall s \in S^{(t)}.
\end{equation}
Note that an RTC participant is admissible to access only one server to upload contents.
    \item Data can be transmitted to other participants in an RTC group:
\begin{equation}
    \sum_{\substack{(i, j) \in E^{(t)} \\i\in V}}\!\!\!\!f^{(t)}_{s i j} = 1, \quad \forall A_g \in A^{(t)}, \forall s \in A_g, \forall j \in A_g \setminus \{s\}.
\end{equation}
    \item The egress bandwidth of a node does not exceed its egress bandwidth capacity:
\begin{equation}
    \sum_{s \in S^{(t)}}(w_s^{(t)} \!\!\cdot\!\!\!\!\!\!\sum_{\substack{(i, j) \in E^{(t)} \\j \in V^{(t)}}}\!\!\!\! f^{(t)}_{s i j})\leq c_i, \quad \forall i \in V.
\end{equation}
    \item The ingress bandwidth of a server node in the path to transmit data that is originally from the same RTC participant is no more than its egress bandwidth:
\begin{equation}
    \sum_{\substack{(i, j) \in E^{(t)} \\i\in V^{(t)}}}\!\!\!\!f^{(t)}_{s i j} \leq\!\!\!\! \sum_{\substack{(j, k) \in E^{(t)} \\ k \in V^{(t)}}}\!\!\!\!f^{(t)}_{s j k}, \quad \forall s \in S^{(t)}, \forall j \in V.
\end{equation}
\end{itemize}

Based on the discussions above, the RTCN-BA problem is formulated as
\begin{gather}
        \min \sum_{i \in V}u_i \cdot Q_{95}(\{\sum_{s\in S^{(t)}}(w_s^{(t)}\!\! \cdot\!\!\!\!\!\!\sum_{\substack{(i, j) \in E^{(t)} \\j \in V^{(t)}}} \!\!\!\!f^{(t)}_{s i j})\}^p_{t = 1}) \nonumber
\end{gather}
\begin{align}
         \text{subject to}\quad\quad\quad\quad\quad\sum_{j \in V}f^{(t)}_{s s j} = 1, \quad & \forall s \in S^{(t)}, \forall t \in T, \\[11pt]
          \sum_{\substack{(i, j) \in E^{(t)} \\i\in V}}\!\!\!\!f^{(t)}_{s i j} = 1, \quad & \forall A_g \in A^{(t)}, \forall s \in A_g, \forall j \in A_g \setminus \{s\}, \forall t \in T, \\[11pt]
          \sum_{s \in S^{(t)}}(w_s^{(t)}\!\! \cdot\!\!\!\!\!\!\sum_{\substack{(i, j) \in E^{(t)} \\j \in V^{(t)}}} \!\!\!\!f^{(t)}_{s i j})\leq c_i, \quad\!\! & \forall i \in V, \forall t \in T,\\[11pt]
          \sum_{\substack{(i, j) \in E^{(t)} \\i\in V^{(t)}}}\!\!\!\!f^{(t)}_{s i j} -\!\!\!\! \sum_{\substack{(j, k) \in E^{(t)} \\ k \in V^{(t)}}}\!\!\!\!f^{(t)}_{s j k} \leq 0, \quad\!\! & \forall s \in S^{(t)}, \forall j \in V, \forall t \in T, \\[11pt]
         f^{(t)}_{s i j } \in \{0, 1\}, \quad\!\!\!\! & \forall s \in S^{(t)}, \forall (i, j) \in E^{(t)}, \forall t \in T.
\end{align}

\subsection{Cloud wide area network}
\subsubsection{Background}

A wide area network (WAN) refers to a telecommunication network that extends over a large geographic area.
The cloud wide area network (Cloud-WAN) enables cloud providers to supply services for clients that are geographically distributed.
The rapidly increasing demands have inspired cloud providers to focus on the traffic exchanged between the Cloud-WAN and other networks on the Internet, see, e.g. \cite{singh2021cost-effective}.
In cloud computing, clients access the service from the cloud
through Point of Presence (PoP).
In general, the requirement of a client is responded by the PoP which is geographically closest to the client, because a shorter distance usually means lower latency and higher quality. However, to balance the bandwidth utilization and thus save the cost, cloud providers may also redirect some requirements to a farther PoP that can also provide service with acceptable QoS.

\begin{figure}[!ht]
    \centering
    \includegraphics[width=0.76\linewidth]{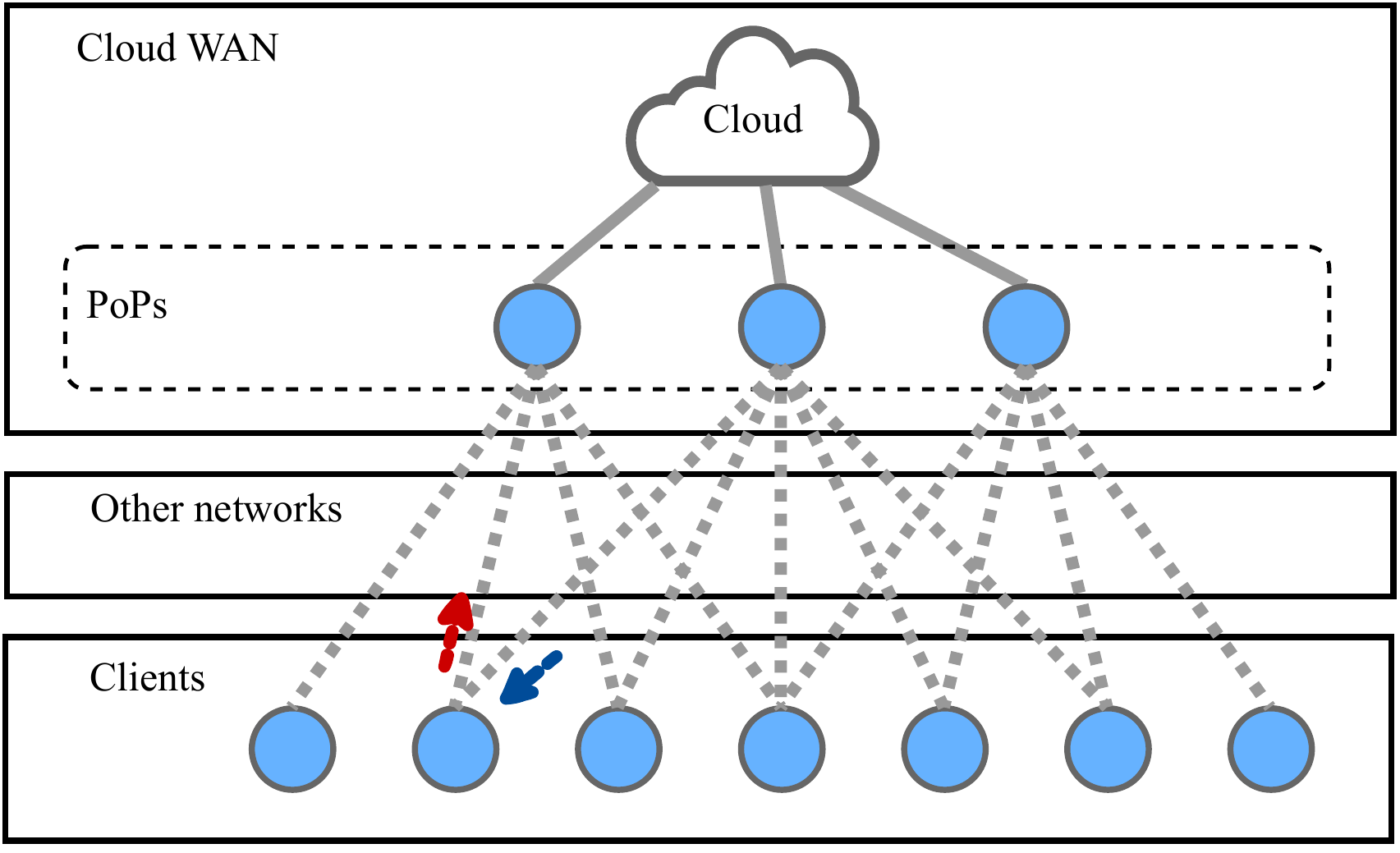}
    \caption{The logical architecture of the cloud wide area network (Cloud-WAN).}
    \label{fig:cwan}
\end{figure}
Figure \ref{fig:cwan} shows the logical architecture of the Cloud-WAN.
As shown in Figure \ref{fig:cwan},
PoPs access the cloud through inner-domain networks, while clients access PoPs through inter-domain networks.
As shown by the red arrow and the blue arrow, the PoP that responds to the requirement of a client is different from the PoP that received it before, which is controlled by the traffic scheduling system of the cloud provider to achieve some central-level goals such as balancing traffic and saving bandwidth cost.
Besides, the traffic demand of a client can be transferred by more than one PoP.

As the traffic transmitted into PoPs is not controlled by the cloud provider, only the egress bandwidth of PoPs is considered in a traffic scheduling system.
According to the NBA problem, PoPs in a Cloud-WAN are source nodes, while clients are destination nodes.
The structure of data transmission network is a bipartite graph.
By the background of Cloud-WAN, a destination node may receive data from more than one source node.
According to these specific attributes, we extend the NBA problem to this scenario, which is named Cloud-WAN Bandwidth Allocation (Cloud-WAN-BA) problem.

\subsubsection{Formulation}
Consider a Cloud-WAN-BA problem which is defined on $G = (V, E)$ during a billing cycle $P$, where $G$ is a bipartite graph;
$V_d = \{1, 2, \dots, m\}$ is the set of PoPs;
$V_r = \{m+1, m+2, \dots, n\}$ is the set of clients, $V = V_d \cup V_r$;
$E = \{(i, j)|i \in V_d, j \in V_r\}$ is the set of edges abstracted from network links;
and $P$ is a given period of time separated by a set of sampling time points $T = \{1, 2, \dots, p\}$.
Components are given as follows.

\textbf{Parameters:}
\begin{itemize}[leftmargin=*]
    \item $u_i > 0$ is the unit-price of bandwidth of the PoP $i \in V_d$.
    \item $d^{(t)}_j>0$ is the total traffic demands of the client $j\in V_r$ in the time slot $[t, t+1], t\in T$.
    \item $c_i>0$ is the egress bandwidth capacity of the PoP $i\in V_d$.
    \item $E^{(t)} = \{(i, j)|i \in V_d, j \in V_r\}$ is the set of available connections among PoPs and clients in the time slot $[t, t+1], t \in T$.
\end{itemize}

\textbf{Decision Variables:}

$f^{(t)}_{ i j }\in \mathbb{Z}^+$ represents the amount of traffic assigned to the edge $(i,j)\in E^{(t)}$ in the time slot $[t, t+1], t\in T$, where $\mathbb{Z}^+$ is the set of positive integers.

\textbf{Objective function:}

The total bandwidth cost during the billing cycle $P$ should be minimized.
As only egress bandwidth is considered, we denote the egress bandwidth distribution of node $i \in V_d$ over $T$ by
\begin{equation}
    B_i = \{\sum_{\substack{(i, j) \in E^{(t)} \\j \in V_r}}\!\! f^{(t)}_{i j}\}^p_{t = 1}.
\end{equation}
Let $Q_{95}(\cdot)$ be the operation to get the $95^{th}$ percentile of a set of numbers. The goal is represented by
\begin{equation}
    \min \sum_{i \in V_d}u_i \cdot Q_{95}(B_i).
\end{equation}

\textbf{Constraints:}
\begin{itemize}[leftmargin=*]
    \item The traffic demands of a client have to be fulfilled in the time slot $[t,t+1],\forall t\in T$:
\begin{equation}
    \sum_{\substack{(i, j) \in E^{(t)} \\i\in V_d}}\!\!\!\!f^{(t)}_{ i j }=d^{(t)}_j, \quad \forall j \in V_r,
\end{equation}
    \item The egress bandwidth of the PoP $i\in V_d$ does not exceed its capacity $c_i$:
\begin{equation}
    \sum_{\substack{(i, j) \in E^{(t)} \\ j \in V_r}}\!\!\!\!f^{(t)}_{ i j } \leq c_i, \quad\forall i \in V_d,
\end{equation}
\end{itemize}
Based on the discussions above, the Cloud-WAN-BA problem is formulated as
\begin{gather}
    \min \sum_{i \in V_d}u_i \cdot Q_{95}(\{\sum_{\substack{(i, j) \in E^{(t)} \\j \in V_r}} \!\!\!\!f^{(t)}_{i j}\}^p_{t = 1}) \label{pcN obj} \nonumber
\end{gather}
\begin{align}
    \text{subject to}\quad\quad\quad\quad\quad\sum_{\substack{(i, j) \in E^{(t)} \\i\in V_d}}\!\!\!\!f^{(t)}_{ i j }=d^{(t)}_j, \quad\quad\quad &\forall j \in V_r, \forall t \in T, \label{equ:cwan1} \\[11pt]
    \sum_{\substack{(i, j) \in E^{(t)} \\ j \in V_r}}\!\!\!\!f^{(t)}_{ i j }\leq c_i, \quad\quad\quad\quad\!\! &\forall i \in V_d, \forall t \in T, \label{equ:cwan2}\\[11pt]
    f^{(t)}_{ i j } \in \mathbb{Z}^+, \quad\quad\quad &\forall (i, j) \in E^{(t)}, \forall t \in T. \label{pcn: variable}
\end{align}

Considering the structure of (\ref{equ:cwan1}) and (\ref{equ:cwan2}), we quote the following proposition.

\begin{proposition}[totally unimodular \cite{schrijver1998theory}]
A matrix A is said to be totally unimodular if the determinant of every square submatrix formed from it has value $0$, $+1$, or $-1$.
In the system of equations $Ax = b$, assume that $A$ is totally unimodular and that all elements of $A$ and $b$ are integers.
Then, all basic solutions have integer components.
\end{proposition}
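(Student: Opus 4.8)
The plan is to unwind the definition of a basic solution and then apply Cramer's rule, invoking total unimodularity only to control the determinant of the basis matrix. First I would recall that a basic solution of $Ax=b$ is obtained by selecting a set of columns of $A$ indexed by some $J$ whose corresponding submatrix $B$ is square and nonsingular (a basis), setting the non-basic variables $x_j=0$ for $j \notin J$, and solving $B x_J = b$ for the basic variables. Since the non-basic components equal $0$, they are trivially integers, so it suffices to show that every component of $x_J = B^{-1}b$ is an integer.

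Next I would use the hypothesis on $A$. Because $B$ is a square submatrix of $A$ and $A$ is totally unimodular, $\det(B) \in \{0,+1,-1\}$; and because $B$ is a basis it is nonsingular, so $\det(B)\neq 0$. Hence $\det(B) \in \{+1,-1\}$. This is the only place where total unimodularity is used.

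Then I would invoke Cramer's rule to write the $k$-th basic component as $(x_J)_k = \det(B^{(k)})/\det(B)$, where $B^{(k)}$ is the matrix obtained from $B$ by replacing its $k$-th column with the vector $b$. The entries of $B^{(k)}$ consist of entries of $A$ together with entries of $b$, all integers by hypothesis, so $\det(B^{(k)})$ is an integer. Dividing an integer by $\det(B)=\pm 1$ yields an integer, so $(x_J)_k \in \mathbb{Z}$ for every $k$. Combining this with the vanishing non-basic components shows that all components of the basic solution are integers.

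The one point that deserves care — and which I expect to be the only conceptual subtlety rather than a genuine obstacle — is that $B^{(k)}$ is in general not itself a submatrix of $A$, since one of its columns is $b$. Consequently total unimodularity says nothing about $\det(B^{(k)})$, and its integrality must be deduced purely from the integrality of the entries of $A$ and $b$. Keeping these two roles separate, namely unimodularity for the denominator and integer entries for the numerator, is what makes the argument go through cleanly.
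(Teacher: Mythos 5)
Your proof is correct: the decomposition into basic/non-basic variables, the use of total unimodularity only to pin $\det(B)=\pm 1$, and Cramer's rule for the numerator is exactly the standard argument, and your remark that $B^{(k)}$ is not a submatrix of $A$ (so its integrality comes solely from integer entries, not from unimodularity) is the right point of care. Note that the paper itself offers no proof at all --- it quotes this proposition from the cited reference (Schrijver) as a known result --- so there is nothing to compare against beyond observing that your argument is the standard one found in that literature.
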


\section{Discussions}\label{sec:dis}

We propose the Network Bandwidth Allocation (NBA) problem for cloud computing, and formulate it as an integer programming model. The following  three distinctive characteristics of transmitting data through networks are considered: (1) the cost functions generated by pricing schemes are nonlinear, nonconvex and noncontinuous; (2) network topology updates periodically; (3) data can be replicated within network nodes. These attributes, along with the high dimensionality of variables, make it very challenging to mathematically analyze the NBA problem.
The proposed NBA problem is a fundamental and high-level representation for modeling transmitting data through networks while minimizing the bandwidth cost, and it can be extended flexibly to suit various cloud computing scenarios. Moreover, the NBA problem can be easily modified to suit for other pricing schemes if its objective function based on the $95^{th}$ percentile billing is appropriately adjusted.
We show four real cloud computing applications of the NBA problem: the content delivery network (CDN),
the live video delivery network (LVDN), the real-time communication network (RTCN), and the cloud wide area network (Cloud-WAN). It is expected that our first effort of nailing down the mathematical models for these cloud computing problems can boost more rigorous and insightful studies from various perspectives.

It is interesting yet very challenging to design efficient algorithms which can be applied to the integer programming formulation of the NBA problem. Standard solvers such as \textit{SCIP}, \textit{CPLEX}, and \textit{Gurobi} turn out not to work well due to the underlying hierarchical structures, combinatorial properties, as well as the high dimensionality of variables of the corresponding integer programming formulation. We are exploring the integration of conventional optimization techniques/algorithms with problem-tailored heuristics to design applicable and/or efficient algorithms. It is also noted that some applications usually require solving a large set of instances that are generated by homogeneous datasets yet with different problem parameters. Hence, it is promising to apply some machine-learning based methods to extract valuable information from data. For example, bandwidth consumption empirically follows a regular distribution over the billing cycles because there is a strong relationship between customer usage habits and time over billing cycles.
Besides, since the NBA problem is defined on networks, the graph neural network (GNN) seems to be a primary deep learning architecture that can be used to alleviate the difficulties in solving the NBA problem via its functions of learning, reasoning, and generalizing graph-structured data.

%\newpage

\bibliographystyle{ieeetr}
%\typeout{}
\bibliography{reference}

\end{document}